\newtheorem{proposition}{Proposition}
\newtheorem{lemma}{Lemma}
\newtheorem{thm}{Theorem}
\theoremstyle{definition}
\newtheorem{rmk}{Remark}
\DeclareMathOperator{\ind}{Ind}
\DeclareMathOperator{\Lie}{Lie}
\DeclareMathOperator{\Hom}{Hom}
\DeclareMathOperator{\Ad}{Ad}
\DeclareMathOperator{\id}{Id}
\title{Filtrations of tilting modules and costalks of parity sheaves}
\author{Linyuan Liu}
\address{Sydney Mathematical Research Institute\\
University of Sydney\\
NSW 2006\\
Australia}
\email{Linyuan.Liu@normalesup.org}
\thanks{The work in this article was completed while the author is a Research
Associate at the University of Sydney supported by Australian Research
Council grant DP160103897. The author is grateful for the hospitality
of Sydney Mathematical Research Institute during her stay there. }
\subjclass{20G05, 20C08, 20G15,20C20, 22E57} 
\keywords{}
\date{25 February, 2020}
\begin{document}
\maketitle
\begin{abstract}
  In this article, we proved that the costalks of parity sheaves on
  the affine Grassmannian correspond to the Brylinski-Kostant
  filtration of the corresponding weight spaces of tilting modules.
\end{abstract}
\section{Introduction}
\subsection{Summary}
Assume \( G \) is a split reductive algebraic group over a field \( k
\). When \( k=\mathbb{C} \), R.K.Brylinski constructed a filtration of
weight spaces of a \( G \)-module, using the action of a principal
nilpotent element of the Lie algebra, and proved that this filtration
corresponds to Lusztig's q-analogue of the weight multiplicity
(cf.\cite{Bry89} ). Later, Ginzburg discovered that this filtration
has an interesting geometric interpretation via the geometric Satake
correspondence (cf. \cite{Gin89}). The goal of this article is to partially generalise
these results to the case where the characteristic of \( k \) is
positive.

\subsection{Main result}
In the rest of the article, let  \( G \) be a reductive group over \( k \) and is a product of
simply-connected quasi-simple groups and general linear groups.
Suppose \( k \) algebraically closed such that the characteristic is
 good for each quasi-simple factor of \( G \) in the sense of \cite{JMW}. Suppose there exists a non degenerate \( G \)-equivariant bilinear
form on \( \mathfrak{g} \). When there is no confusion, we write \(
\otimes \) for \(\otimes_{k} \). Fix a Borel subgroup \( B\subset G \)
and a maximal torus \( T\subset B \). Let \( \mathbf{X}=X^{*}(T) \) be
the weight lattice and \( \mathbf{X}^{+} \) be the set of dominant
weights with respect to \( B \).

Let \( \mathcal{G}r \) be the  affine Grassmannian variety of
the complex Langlands dual group \( \check{G} \) of \( G \). Let \(
\check{T} \subset \check{G} \) be the  maximal torus. For each \( \mu\in
\mathbf{X} \), let \( L_{\mu} \) be the corresponding \( \check{T}
\)-fixed point in \( \mathcal{G}r \), and let \( i_{\mu} \) be the
embedding \( \{L_{\mu}\}\hookrightarrow \mathcal{G}r \). When \( \mu \) is dominant,
denote by \( \mathcal{G}r^{\mu} \) the \(
\check{G}(\mathcal{O})=\check{G}(\mathbb{C}[[t]]) \)-orbit of \(
L_{\mu} \) in \( \mathcal{G}r \), by \( \mathcal{E}(\mu) \) the
indecomposable parity sheaf with respect to the stratum \( \mathcal{G}r^{\mu} \)
(cf. \cite{JMW}), and by \( \mathbf{T}(\mu) \) the indecomposable
tilting module of \( G \) of highest weight \( \mu \).

Denote by \( \mathfrak{g} \), \( \mathfrak{b} \) and \( \mathfrak{t} \) the Lie algebras
of \( G \), \( B \) and \( T \). The main result of this article is the following
\begin{thm}\label{thm:main}
Let \( e\in \mathfrak{b}\) be a principal nilpotent element that is \(
\mathfrak{t} \)-adapted (i.e., there exists \( h\in \mathfrak{t} \)
such that \( [h,e]=e \)). For all \( \lambda,\mu\in\mathbf{X}^{+} \), let \(
\mathbf{F}_{\bullet}(\mathbf{T}(\lambda)_{\mu}) \) be the Brylinski-Kostant
filtration of \( \mathbf{T}(\lambda)_{\mu} \) defined by \( e \), i.e. 
  for all \( n\in\mathbb{N} \), we have
  \begin{displaymath}
    \mathbf{F}_{n}(\mathbf{T}(\lambda)_{\mu})=\{v\in \mathbf{T}(\lambda)_{\mu}\mid
    e^{(i+1)}v=0\text{ for all }i\geq n\},
  \end{displaymath}
and \(  \mathbf{F}_{n}(\mathbf{T}(\lambda)_{\mu})=0 \) whenever \( n<0 \).

Then we have
  \begin{equation}
    \label{eq:7786ed48df651f44}
    \dim
    \mathbf{H}^{2n-\dim(\mathcal{G}r^{\mu})}(i_{\mu}^{!}\mathcal{E}(\lambda))=\dim \big(\mathbf{F}_{n}(\mathbf{T}(\lambda)_{\mu})/\mathbf{F}_{n-1}(\mathbf{T}(\lambda)_{\mu})\big)
  \end{equation}
for all \( n\in \mathbb{Z} \).
\end{thm}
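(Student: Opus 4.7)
The plan is to deduce \eqref{eq:7786ed48df651f44} by passing both sides through the modular geometric Satake equivalence. Under this equivalence, as extended to parity sheaves by Juteau--Mautner--Williamson (see \cite{JMW}), the parity sheaf $\mathcal{E}(\lambda)$ corresponds to the tilting module $\mathbf{T}(\lambda)$, and for each $\mu \in \mathbf{X}$ the weight space $\mathbf{T}(\lambda)_\mu$ should be recoverable from $i_\mu^! \mathcal{E}(\lambda)$ as a \emph{graded} vector space. The statement then reduces to matching two filtrations on $\mathbf{T}(\lambda)_\mu$: the Brylinski--Kostant filtration $\mathbf{F}_\bullet$ on the representation-theoretic side, and the filtration by cohomological degree on the geometric side.

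\textbf{Key steps.} First, I would set up an identification
\[
\bigoplus_n \mathbf{H}^{2n - \dim \mathcal{G}r^\mu}(i_\mu^! \mathcal{E}(\lambda)) \;\cong\; \mathbf{T}(\lambda)_\mu
\]
of graded vector spaces, using Mirkovi\'c--Vilonen's realisation of weight spaces as cohomology of semi-infinite cells (applied here to parity rather than IC sheaves), combined with hyperbolic localisation to the $\check{T}$-fixed point $L_\mu$. Parity vanishing of $\mathcal{E}(\lambda)$ forces concentration in degrees of a single parity, and the shift by $\dim \mathcal{G}r^\mu$ appears through the standard relation between $i_\mu^!$ and $i_\mu^\ast$ on the orbit. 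Second, following the classical picture of Ginzburg \cite{Gin89}, I would realise the principal nilpotent $e$ geometrically as cup product with a distinguished degree-$2$ class $\xi \in \mathbf{H}^2(\mathcal{G}r)$, for instance arising from the first Chern class of the determinant line bundle. The $\mathfrak{t}$-adaptedness hypothesis on $e$ is used precisely to ensure that this operator respects the $T$-weight grading. Granting this identification, the Brylinski--Kostant condition $e^{(i+1)} v = 0$ for all $i \geq n$ becomes the statement that $v$ is represented by a class of cohomological degree at most $2n - \dim \mathcal{G}r^\mu$, so \eqref{eq:7786ed48df651f44} follows by counting graded pieces.

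\textbf{Main obstacle.} The delicate step is the identification of the $e$-action with cup product by $\xi$ in positive characteristic. In characteristic zero this uses semisimplicity of the principal $\mathfrak{sl}_2$-action together with hard Lefschetz for $\mathcal{G}r^\lambda$, both of which can fail in the modular setting: tilting modules are far from semisimple, and parity sheaves need not satisfy hard Lefschetz. I would try to circumvent this by first verifying the claim when $\lambda$ is minuscule or small, where $\mathcal{G}r^\lambda$ is smooth and $\mathcal{E}(\lambda)$ is a shifted constant sheaf amenable to a direct calculation, and then propagating to arbitrary $\lambda$ using the tensor/convolution structure of the Satake category together with the behaviour of divided powers $e^{(n)}$ on tensor products (controlled by Lucas-type combinatorics). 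A secondary, more mechanical, difficulty is the careful bookkeeping of parity, cohomological shifts, and $\check{G}(\mathcal{O})$-equivariance data throughout these reductions, in particular to ensure that the filtration comparison is strict and not merely one-sided.
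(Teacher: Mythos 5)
Your proposal follows Ginzburg's characteristic-zero strategy rather than the route taken in the paper, and as written it has a genuine gap at its centre: the two "key steps" are each essentially a restatement of the theorem rather than a proof of it. First, the isomorphism $\bigoplus_n \mathbf{H}^{2n-\dim(\mathcal{G}r^\mu)}(i_\mu^!\mathcal{E}(\lambda)) \cong \mathbf{T}(\lambda)_\mu$ "of graded vector spaces" is not meaningful as stated: the Mirkovi\'c--Vilonen/hyperbolic-localisation realisation of the weight space uses (co)restriction to the semi-infinite orbit through $L_\mu$, which is concentrated in a single cohomological degree, whereas the costalk at the point $L_\mu$ genuinely spreads over several degrees. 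The weight space $\mathbf{T}(\lambda)_\mu$ carries no a priori grading; producing one from the costalk and identifying it with the Brylinski--Kostant filtration is exactly the content of the theorem, so only the equality of total dimensions comes for free (via equivariant localisation). Second, even granting that the divided powers $e^{(n)}$ act on $\bigoplus_\mu \mathbf{T}(\lambda)_\mu = \mathbf{H}^\bullet(\mathcal{G}r,\mathcal{E}(\lambda))$ as cup product with a degree-$2$ class, this is an action on the \emph{global} cohomology; you still need a mechanism transporting the filtration by kernels of $e^{(i+1)}$ on the global object to the cohomological grading of the \emph{local} object $i_\mu^!\mathcal{E}(\lambda)$. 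In characteristic zero Ginzburg does this using purity and the decomposition theorem, neither of which is available here. Your fallback (minuscule or small $\lambda$ plus convolution) does not obviously repair this: not every quasi-simple factor has enough minuscule coweights (e.g. types $E_8$, $F_4$, $G_2$), $\mathcal{E}(\lambda)$ is in general only a direct summand of a convolution, and the compatibility of the Brylinski--Kostant filtration with tensor products and with passage to direct summands is not supplied by your argument.

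For comparison, the paper sidesteps all of this by quoting the Mautner--Riche description of the $\check T$-equivariant costalk, $\mathbf{H}^{\bullet-\dim(\mathcal{G}r^\mu)}_{\check T}(i_\mu^!\mathcal{E}(\lambda)) \cong (\mathbf{T}(\lambda)\otimes\Gamma(\tilde{\mathfrak{g}},\mathcal{O}_{\tilde{\mathfrak{g}}}(-w_0\mu)))^G$, specialising the $H^\bullet_{\check T}(\mathrm{pt})$-action at a regular semisimple $h\in\mathfrak{t}$ to obtain $(\mathbf{T}(\lambda)\otimes k_{-w_0\mu}\otimes k[h+\mathfrak{n}])^B$, and then running Brylinski's algebraic argument: density of $B\cdot e$ in $\mathfrak{n}$, evaluation at $h$, and the identity $f(\psi\otimes 1)(h+te)=f(\exp(-te)(\psi\otimes 1))(h)$ match the polynomial-degree filtration on $k[h+\mathfrak{n}]$ with the Brylinski--Kostant filtration. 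If you wish to pursue your route, you need at minimum a positive-characteristic substitute for the purity/hard Lefschetz input; equivariant localisation is the standard such substitute, and following it through leads essentially back to the paper's argument.
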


The proof in this article follows exactly the same idea with
\cite{Bry89} and
Proposition 2.6.3 in \cite{GR15}.

The author thanks Simon Riche and Geordie Williamson for many
useful discussions.

\section{Proof of the main result}
Let \( \mathfrak{n}=\Lie(U) \), where \( U \) is the unipotent radical
of \( B \). Let \(
\tilde{\mathfrak{g}}=G\times^{B}(\mathfrak{g}/\mathfrak{n})^{*} \) be
the Grothendieck resolution. Then we have an isomorphism of graded \( k[\mathfrak{t}^{*}]\cong H^{\bullet}_{\check{T}}(\text{pt}) \)-modules
(cf. \cite{MR15} Prop.1.9)
\begin{equation}
  \label{eq:b3126ad8387fcdf4}
  \mathbf{H}_{\check{T}}^{\bullet-\dim(\mathcal{G}r^{\mu})}(i_{\mu}^{!}\mathcal{E}(\lambda))\cong
  (\mathbf{T}(\lambda)\otimes \Gamma(\tilde{\mathfrak{g}},\mathcal{O}_{\tilde{\mathfrak{g}}}(-w_{0}\mu)))^{G}
\end{equation}
where on the right hand side \( \mathbf{T}(\lambda) \) is in degree \( 0 \) and
the global sections are equipped with the grading induced by the \(
\mathbb{G}_{m} \)-action on \( \tilde{\mathfrak{g}} \) defined by
\begin{displaymath}
  z\cdot(g\times^{B}x)=g\times^{B}(z^{2}x).
\end{displaymath}

\begin{lemma}
  Let \( P\subset G \) be a parabolic subgroup such that \( G\to G/P
  \) is locally trivial and let \( V,V' \) be
  \( P \)-modules. Let \( \mathbb{G}_{m} \) act on \( V \) by \(
  z\cdot x=z^{2}x \). Let \( \pi:G\times^{P}V \to G/P\) be the natural
  map. Then we have an isomorphism of graded \( G
  \)-modules
  \begin{equation}
    \label{eq:f9ec0f3bb5480d84}
    \Gamma(G\times^{P}V,\pi^{*}\mathcal{L}_{G/P}(V'))=\ind_{P}^{G}(V'\otimes
    k[V])
  \end{equation}
  where \( \mathcal{L}_{G/P}(V') \) is the associated sheaf 
  induced by the \( P \)-module \( V' \) in the sense of \cite{Jan03}
  I.5.8 with \( X=G\times V \),  the
  grading on the left hand side is induced by the action of \(
  \mathbb{G}_{m} \) on \( V \), and the grading on the right hand side
  is induced by the grading on \( k[V]\cong S(V^{*}) \) with \( V^{*}
  \) placed on degree \( 2 \).
\end{lemma}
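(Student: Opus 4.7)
The plan is to reduce to the standard identification $\Gamma(G/P,\mathcal{L}_{G/P}(W))=\ind_{P}^{G}(W)$ (Jantzen I.5.12) by pushing the sheaf $\pi^{*}\mathcal{L}_{G/P}(V')$ down to $G/P$. Since $\pi$ is a vector bundle, in particular an affine morphism, $\pi_{*}$ is exact and preserves global sections, so it suffices to compute $\pi_{*}\pi^{*}\mathcal{L}_{G/P}(V')$ as a $G$-equivariant graded quasi-coherent sheaf on $G/P$.

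The projection formula gives
\[
\pi_{*}\pi^{*}\mathcal{L}_{G/P}(V')\cong \mathcal{L}_{G/P}(V')\otimes_{\mathcal{O}_{G/P}}\pi_{*}\mathcal{O}_{G\times^{P}V}.
\]
The core step is then to identify $\pi_{*}\mathcal{O}_{G\times^{P}V}$ with $\mathcal{L}_{G/P}(k[V])$, where $k[V]\cong S(V^{*})$ is the natural $P$-module. I would verify this locally: since $G\to G/P$ is locally trivial, cover $G/P$ by opens $U$ admitting a section, over which $\pi^{-1}(U)\cong U\times V$ and the ring of functions is $\mathcal{O}(U)\otimes k[V]$; reglueing these pieces according to the transition cocycles of $G\to G/P$ reproduces exactly the associated-sheaf construction of Jantzen I.5.8 applied to the $P$-module $k[V]$. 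The $\mathbb{G}_{m}$-action $z\cdot x=z^{2}x$ on $V$ induces, via the dual action on linear functions, the claimed grading with $V^{*}$ in degree $2$ (up to the standard sign convention that turns the weight $-2$ on $V^{*}$ into degree $+2$).

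To finish, I would invoke the monoidality of the associated-sheaf functor, $\mathcal{L}_{G/P}(V')\otimes\mathcal{L}_{G/P}(k[V])\cong \mathcal{L}_{G/P}(V'\otimes k[V])$, and then take global sections to obtain $\ind_{P}^{G}(V'\otimes k[V])$. $G$-equivariance is automatic throughout since every operation is $G$-equivariant, and the $\mathbb{G}_{m}$-grading is untouched by these identifications because it is carried entirely by the $k[V]$ factor. The only step demanding genuine care, rather than a real obstacle, is the local-to-global identification of $\pi_{*}\mathcal{O}_{G\times^{P}V}$ with $\mathcal{L}_{G/P}(k[V])$ as graded $G$-equivariant sheaves; the rest is formal.
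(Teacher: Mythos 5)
Your proof is correct, but it takes a genuinely different route from the paper's. The paper works upstairs: it sets $X=G\times V$ with the $P$-action $(g,x)\cdot p=(gp,p^{-1}x)$, identifies $G\times^{P}V$ with $X/P$, invokes the compatibility of the associated-sheaf construction with pullback (Jantzen I.5.17(1)) to get $\pi^{*}\mathcal{L}_{G/P}(V')\cong\mathcal{L}_{X/P}(V')$, and then computes global sections in one stroke as $(V'\otimes k[X])^{P}=(k[G]\otimes(V'\otimes k[V]))^{P}=\ind_{P}^{G}(V'\otimes k[V])$. You instead stay downstairs on $G/P$: push forward along the affine map $\pi$, apply the projection formula, identify $\pi_{*}\mathcal{O}_{G\times^{P}V}$ with $\mathcal{L}_{G/P}(k[V])$ by a local-triviality/cocycle argument, use monoidality of $\mathcal{L}_{G/P}$, and finish with $\Gamma(G/P,\mathcal{L}_{G/P}(W))=\ind_{P}^{G}(W)$. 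Both are sound. The paper's argument is shorter because Jantzen's formalism for $\mathcal{L}_{X/P}$ over a general base $X$ absorbs exactly the step you flag as ``demanding genuine care'' (the identification of $\pi_{*}\mathcal{O}$ with the associated sheaf of $k[V]$, equivalently the gluing computation); your version makes that content explicit and only uses the associated-sheaf machinery over the flag variety itself, at the cost of an extra lemma and of needing the projection formula and monoidality of $\mathcal{L}_{G/P}$. On the grading, you are slightly more careful than the paper (which just asserts ``the desired gradings''), though your parenthetical about the sign converting weight $-2$ on $V^{*}$ into degree $+2$ should be pinned down to whichever convention for the induced action on functions you adopt.
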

\begin{proof}
Let \( X=G\times V \), then \( P \) acts on \( X \) by \( (g,x)\cdot
p=(gp,p^{-1}x) \) and we have \( G\times^{P}V=X/P \) by
definition. Then we have
\begin{displaymath}
  \pi^{*}(\mathcal{L}_{G/P}(V'))\cong \mathcal{L}_{X/P}(V')
\end{displaymath}
by \cite{Jan03} I.5.17 (1). Now we have
\begin{align*}
  \Gamma(G\times^{P}V,\pi^{*}\mathcal{L}_{G/P}(V'))&=\Gamma(X/P,
                                                     \mathcal{L}_{X/P}(V'))\\
                                                   &=(V'\otimes k[X])^{P}\\
                                                   &=(V'\otimes k[G]\otimes k[V])^{P}\\
                                                   &=(k[G]\otimes (V'\otimes k[V]))^{P}\\
  &=\ind_{P}^{G}(V'\otimes k[V])
\end{align*}
with the desired gradings.
\end{proof}

Apply the lemma to \( P=B \),\( V=(\mathfrak{g}/\mathfrak{n})^{*} \) and \( V'=k_{-w_{0}\mu}
\), we get an isomorphisme of graded \( G \)-modules
\begin{displaymath}
  \Gamma(\tilde{\mathfrak{g}},\mathcal{O}_{\tilde{\mathfrak{g}}}(-w_{0}\mu))\cong
  \ind_{B}^{G}(k_{-w_{0}\mu}\otimes k[(\mathfrak{g}/\mathfrak{n})^{*}]).
\end{displaymath}
Hence we have an isomorphism of graded \( k[\mathfrak{t}^{*}] \)-modules
\begin{displaymath}
  \mathbf{H}_{\check{T}}^{\bullet-\dim(\mathcal{G}r^{\mu})}(i_{\mu}^{!}\mathcal{E}(\lambda))\cong
  (\mathbf{T}(\lambda)\otimes \ind_{B}^{G}(k_{-w_{0}\mu}\otimes k[(\mathfrak{g}/\mathfrak{n})^{*}]))^{G}\cong
  (\mathbf{T}(\lambda)\otimes k_{-w_{0}\mu}\otimes k[(\mathfrak{g}/\mathfrak{n})^{*}])^{B}
\end{displaymath}
by tensor identity and Frobenius reciprocity.

Take a regular semisimple element \( \phi\in \mathfrak{t}^{*} \). Then we have
isomorphisms of filtered vector spaces
\begin{equation}
  \label{eq:57e826ff89859876}
  \mathbf{H}^{\bullet-\dim(\mathcal{G}r^{\mu})}_{\phi}(i_{\mu}^{!}\mathcal{E}(\lambda))\cong (\mathbf{T}(\lambda)\otimes k_{-w_{0}\mu}\otimes
   k[(\mathfrak{g}/\mathfrak{n})^{*}])^{B}\otimes_{k[\mathfrak{t}^{*}]}k_{\phi}.
 \end{equation}

 Identify \( \mathfrak{g} \) with \( \mathfrak{g}^{*} \) by a non degenerate \( G
 \)-equivariant bilinear form and let \( h\in \mathfrak{t} \) be the
 image of \( \phi \), which is a regular semisimple. Then we have 
 \begin{equation}
   \label{eq:e5fbd3818496a225}
     \mathbf{H}_{\check{T}}^{\bullet-\dim(\mathcal{G}r^{\mu})}(i_{\mu}^{!}\mathcal{E}(\lambda))\cong 
     (\mathbf{T}(\lambda)\otimes k_{-w_{0}\mu}\otimes
     k[\mathfrak{b}])^{B}\otimes_{k[\mathfrak{t}]}k_{h}
   \end{equation}

 To transform the formula above to a form with which is easier to
 deal, we also need the following three lemmas. 

 \begin{lemma}\label{lemma:Jantzen}
   If \( h\in\mathfrak{t}_{\text{rs}} \), then we have a \( B \)-equivariant isomorphism
   of varieties
   \begin{displaymath}
     (h+\mathfrak{n})\times \mathfrak{t}_{\text{rs}}\xrightarrow{\sim}
     \mathfrak{b}\times_{\mathfrak{t}}\mathfrak{t}_{\text{rs}}
   \end{displaymath}
   such that \( (h,h)\mapsto h\times_{\mathfrak{t}}h \) and the following diagramme 
   \begin{displaymath}
     \begin{tikzcd}[column sep=tiny]
     (h+\mathfrak{n})\times
     \mathfrak{t}_{\text{rs}}\ar[rr,"\sim"]\ar[rd,"p_{2}"]&&\mathfrak{b}\times_{\mathfrak{t}}\mathfrak{t}_{\text{rs}}\ar[ld,"\pi_{2}"]\\
     &\mathfrak{t}_{\text{rs}}&
     \end{tikzcd}
   \end{displaymath}
   is an isomorphism of affine bundles over \( \mathfrak{t}_{\text{rs}} \).
 \end{lemma}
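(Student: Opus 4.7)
The plan is to construct an explicit isomorphism by using the adjoint \( U \)-action on regular semisimple elements, and then to verify \( B \)-equivariance via the Levi decomposition \( B=U\rtimes T \).

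The key technical input is that for any \( h'\in\mathfrak{t}_{\text{rs}} \), the orbit map \( U\to h'+\mathfrak{n} \), \( u\mapsto\Ad(u)(h') \), is an isomorphism of varieties. Indeed, since \( h' \) is regular semisimple, \( \mathrm{ad}(h'):\mathfrak{n}\to\mathfrak{n} \) is invertible; this forces both \( \stab_{U}(h')=\{1\} \) and surjectivity of the differential of the orbit map, so a dimension count shows the image fills \( h'+\mathfrak{n} \). Separately, \( h+\mathfrak{n} \) is \( B \)-stable under the adjoint action, because \( T \) fixes \( h \) pointwise and both \( T \) and \( U \) preserve \( \mathfrak{n} \).

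With this in hand I would define
\[
\phi:(h+\mathfrak{n})\times\mathfrak{t}_{\text{rs}}\longrightarrow\mathfrak{b}\times_{\mathfrak{t}}\mathfrak{t}_{\text{rs}},\qquad (h+m,h')\longmapsto\bigl(\Ad(u_{m})(h'),\,h'\bigr),
\]
where \( u_{m}\in U \) is the unique element with \( \Ad(u_{m})(h)=h+m \), produced by the input above applied at \( h'=h \). The inverse sends \( (h'+n,h') \) to \( (\Ad(v)(h),h') \) with \( v\in U \) the unique element satisfying \( \Ad(v)(h')=h'+n \); the two uniqueness statements make these morphisms mutually inverse, and \( (h,h)\mapsto h\times_{\mathfrak{t}}h \) because \( u_{0}=1 \). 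Each projection to \( \mathfrak{t}_{\text{rs}} \) via the second coordinate presents the total space as an affine bundle with fibres modelled on \( \mathfrak{n} \) (trivially on the left, and via \( \mathfrak{b}=\mathfrak{t}\oplus\mathfrak{n} \) on the right), and \( \phi \) is by construction a morphism over \( \mathfrak{t}_{\text{rs}} \) that is fibrewise an isomorphism, giving the claimed isomorphism of affine bundles.

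For \( B \)-equivariance, take any \( b\in B \) and rewrite \( b u_{m}=u't' \) with \( u'\in U \), \( t'\in T \) using \( B=U\rtimes T \). Since \( T \) acts trivially on \( \mathfrak{t} \) via \( \Ad \), one obtains simultaneously \( \Ad(b)(h+m)=\Ad(b u_{m})(h)=\Ad(u')(h) \) and \( \Ad(b)\bigl(\Ad(u_{m})(h')\bigr)=\Ad(b u_{m})(h')=\Ad(u')(h') \). Read against the definition of \( \phi \), the first identity computes \( \phi(b\cdot(h+m,h')) \) as \( (\Ad(u')(h'),h') \), and the second shows this equals \( b\cdot\phi(h+m,h') \). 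The main subtlety is precisely this simultaneous tracking of the same \( u' \) at both \( h \) and \( h' \); once that observation is in hand, the remainder of the proof is formal.
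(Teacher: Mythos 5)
Your construction is essentially the paper's: both maps send \( (X,H) \) to \( (\Ad(b)(H),H) \) where \( b \) conjugates \( h \) to \( X \), resting on the same key input that the orbit map \( U\to h'+\mathfrak{n} \), \( u\mapsto \Ad(u)(h') \), is an isomorphism of varieties (the paper takes this from \cite{Jan04}, p.~188, rather than sketching its proof). The one point where the paper is more careful is in verifying that the inverse is a \emph{morphism} — a bijective morphism need not be an isomorphism in positive characteristic — which it does by writing \( \Phi^{-1} \) as a composition involving Jantzen's map \( f:\mathfrak{b}_{\text{rs}}\to U\times\mathfrak{t}_{\text{rs}} \); your appeal to fibrewise uniqueness needs exactly this family version of the orbit-map isomorphism (regularity of \( (h'+n,h')\mapsto v \) jointly in both variables, not just for each fixed \( h' \)) to close the argument.
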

 \begin{proof}
   First, let us construct a map \( \Phi: (h+\mathfrak{n})\times \mathfrak{t}_{\text{rs}}\rightarrow
     \mathfrak{b}\times_{\mathfrak{t}}\mathfrak{t}_{\text{rs}} \). Let
     \( (X,H)\in  (h+\mathfrak{n})\times \mathfrak{t}_{\text{rs}} \),
     then there exists \( b\in B \) such that \( \Ad(b)(h)=X \). We set
     \( \Phi(X,H)=(\Ad(b)(H),H) \), which indeed lies in \(
     \mathfrak{b}\times_{\mathfrak{t}}\mathfrak{t}_{\text{rs}}  \)
     since by the projection \(\mathfrak{b}\to\mathfrak{t}  \), the
     image of \( \Ad(b)(H) \) is \( H\in \mathfrak{t}_{\text{rs}}
     \). We need to check
     \begin{itemize}
     \item \( \Phi \) is well-defined, which means it doesn't depend
       on the choice of \( b\in B \);
     \item \( \Phi \) is \( B \)-equivariant (obvious);
     \item \( \Phi \) is a morphism of varieties;
     \item \( \Phi \) is bijective;
     \item \( \Phi^{-1} \) is a morphism of varieties;
     \end{itemize}
  If \( X=\Ad(b)(h)=\Ad(b')(h) \), then \( Ad(b^{-1}b')(h)=h \), hence
  \( b^{-1}b'\in T \) since \( h \) is regular semisimple in \(
  \mathfrak{t} \), and \( \Ad(b)(H)=\Ad(b')(H) \) since \(
  H\in\mathfrak{t} \). To prove that \( \Phi \) is a morphism, observe
  that \( \Phi \) is induced by the following commutative diagramme
  \begin{displaymath}
    \begin{tikzcd}
      (h+\mathfrak{n})\times
      \mathfrak{t}_{\text{rs}}\ar[d,"p_{2}"]\ar[r,"\phi\times \id"]& U\times
      \mathfrak{t}_{\text{rs}}\ar[r,"\psi"]&\mathfrak{b}\ar[d]\\
      \mathfrak{t}_{\text{rs}}\ar[rr]&&\mathfrak{t}
    \end{tikzcd}
  \end{displaymath}
  where \( \psi(u,H)=\Ad(u)(H) \), and \( \phi \) is the inverse map of
  \( U\to h+\mathfrak{n}: u\mapsto \Ad(u)(h) \) which is a morphism
  by \cite{Jan04} page 188. Bijectivity is easy to prove. \( \Phi^{-1}
  \) is also a morphism because it is the composition
  \begin{displaymath}
    \mathfrak{b}\times_{\mathfrak{t}}\mathfrak{t}_{\text{rs}}
    \xrightarrow{\pi_{1}}\mathfrak{b}_{\text{rs}}\xrightarrow{f}U\times
    \mathfrak{t}_{\text{rs}}\xrightarrow{g} (h+\mathfrak{n})\times
      \mathfrak{t}_{\text{rs}}
    \end{displaymath}
    where \( f \) is the map on page 188 \cite{Jan04} and \(
    g(u,H)=(\Ad(u)(h),H) \), which are both morphisms.
 \end{proof}

\begin{lemma}\label{lemma:tensor}
  Let \( H \) be an algebraic group over \( k \), \( A \) a flat \(
  k \)-algebra and \( M \) an \( H \)-module. Then the
  natural map \( M^{H}\otimes_{k} A \to (M\otimes_{k}A)^{H}\) is an
  isomorphism of \( A \) modules, where the action of \( H \) on \(
  M\otimes_{k}A \) is induced by the action of \( H \) on \( M \) and
  the trivial action of \( H \) on \( A \).
\end{lemma}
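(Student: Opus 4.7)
The plan is to express taking $H$-invariants as a kernel and then use flatness of $A$ to commute tensor product with this kernel.

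First I would recall that for an algebraic group $H$ over $k$, an $H$-module structure on $M$ is equivalent to a comodule structure over the Hopf algebra $k[H]$, given by a coaction $\Delta_M\colon M\to M\otimes_k k[H]$. Under this description, the invariants $M^H$ fit into an exact sequence
\begin{equation*}
0\longrightarrow M^H\longrightarrow M\xrightarrow{\;\delta\;} M\otimes_k k[H],
\end{equation*}
where $\delta(m)=\Delta_M(m)-m\otimes 1$; in other words, $M^H=\ker(\delta)$.

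Next I would identify the corresponding coaction on $M\otimes_k A$. Since $H$ acts trivially on $A$, the coaction on $M\otimes_k A$ is $\Delta_M\otimes \id_A$ (after reordering the tensor factors), and hence the analogous map $\delta'\colon M\otimes_k A\to M\otimes_k A\otimes_k k[H]$ is exactly $\delta\otimes \id_A$. Consequently $(M\otimes_k A)^H=\ker(\delta\otimes\id_A)$.

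Finally I would invoke flatness of $A$: tensoring the defining exact sequence of $M^H$ with $A$ over $k$ preserves exactness, yielding
\begin{equation*}
0\longrightarrow M^H\otimes_k A\longrightarrow M\otimes_k A\xrightarrow{\;\delta\otimes\id_A\;} M\otimes_k k[H]\otimes_k A,
\end{equation*}
which identifies $M^H\otimes_k A$ with $\ker(\delta\otimes\id_A)=(M\otimes_k A)^H$. A quick check shows this isomorphism coincides with the natural map in the statement. The only potentially subtle point—hence the main thing to watch—is that the coaction on $M\otimes_k A$ induced by the trivial action on $A$ really does equal $\delta\otimes\id_A$ up to the obvious swap of tensor factors; once this is verified, flatness delivers the result.
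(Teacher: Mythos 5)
Your argument is correct and is essentially the paper's own proof, which simply asserts that flatness of $A$ makes the map a bijection and leaves the verification as "easy to check." Realizing invariants as the kernel of $\delta(m)=\Delta_M(m)-m\otimes 1$ and tensoring that left-exact sequence with the flat algebra $A$ is precisely the standard way to carry out that check, so your writeup just supplies the details the paper omits.
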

\begin{rmk}
  The assumption of flatness of \( A \) is automatically satisfied,
  since \( k \) is a field. We include this assumption since the
  statement is also correct even in more general cases, where the
  flatness will be crucial.
\end{rmk}
\begin{proof}
  Since \( A \) is flat, it is easy to check that the map 
  \( M^{H}\otimes_{k} A \to (M\otimes_{k}A)^{H}, \; m\otimes a\mapsto
  m\otimes a \) is a bijection.
\end{proof}

\begin{lemma}\label{lemma:localisation}
  Let \( H \) be an algebraic group over \( k \) and \( A \) a flat \(
  k \)-algebra. Let \( M \) be an \( H \)-module and a torsion free \( A \)-module such that the
  two actions commute (i.e. \( h\cdot (am)=a(h\cdot m) \) for all
  \( m\in M \), \( a\in A \) and \( h\in H \)). Then for any
  multiplicative subset \( S\subset A \), the natural morphism
  \begin{displaymath}
    S^{-1}(M^{H})\to (S^{-1}M)^{H}
  \end{displaymath}
  is an isomorphism of \( S^{-1}A \)-modules.
\end{lemma}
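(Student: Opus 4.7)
The plan is to realise $M^H$ as the kernel of an $A$-linear map whose formation commutes with localization. Write $\Delta_M \colon M \to M \otimes_k k[H]$ for the comodule map encoding the $H$-action on $M$, and $\iota_M \colon M \to M \otimes_k k[H]$, $m \mapsto m \otimes 1$. The standard description of invariants gives an exact sequence
\begin{displaymath}
  0 \to M^H \to M \xrightarrow{\Delta_M - \iota_M} M \otimes_k k[H].
\end{displaymath}
Because the $A$- and $H$-actions commute (and $H$ acts trivially on $A$), $\Delta_M$ is $A$-linear when $M \otimes_k k[H]$ is given the $A$-action through the first factor, so the sequence sits in the category of $A$-modules, and $M^H$ is an $A$-submodule of $M$.

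Next I would apply $S^{-1}(-) = - \otimes_A S^{-1}A$, which is exact since localization is always flat (independently of any assumption on $A$ itself). This yields an exact sequence
\begin{displaymath}
  0 \to S^{-1}(M^H) \to S^{-1}M \to S^{-1}(M \otimes_k k[H])
\end{displaymath}
of $S^{-1}A$-modules. The crucial identification is $S^{-1}(M \otimes_k k[H]) \cong (S^{-1}M) \otimes_k k[H]$, valid because the $A$-module structure on $M \otimes_k k[H]$ only uses the first factor; under this identification, the map induced from $\Delta_M - \iota_M$ becomes $\Delta_{S^{-1}M} - \iota_{S^{-1}M}$, where $\Delta_{S^{-1}M}$ is the comodule structure on $S^{-1}M$ obtained by extending the one on $M$ (which makes sense precisely because the two actions commute, so that $h \cdot (m/s) := (h \cdot m)/s$ is well-defined). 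Its kernel is by definition $(S^{-1}M)^H$, hence $S^{-1}(M^H) \cong (S^{-1}M)^H$, and a direct check on elementary tensors $m \otimes (1/s)$ confirms that this isomorphism coincides with the canonical map of the statement.

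The only point requiring genuine verification is the compatibility in the previous paragraph, namely that the isomorphism $S^{-1}(M \otimes_k k[H]) \cong (S^{-1}M) \otimes_k k[H]$ intertwines $S^{-1}(\Delta_M)$ with $\Delta_{S^{-1}M}$; this is a routine unwinding of definitions. Notice that neither the torsion-freeness of $M$ nor the flatness of $A$ over $k$ actually enters the argument: the former is a natural companion hypothesis in view of the application (where one wants $M \hookrightarrow S^{-1}M$), and the latter is presumably included, as in \autoref{lemma:tensor}, to signal that the statement admits generalisations beyond the base-field setting.
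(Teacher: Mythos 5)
Your argument is correct, and it is considerably more than the paper provides: the paper's ``proof'' consists of the single sentence specifying the map \( s^{-1}m\mapsto s^{-1}m \), leaving well-definedness and bijectivity entirely to the reader. Your route --- realising \( M^{H} \) as the kernel of the \( A \)-linear map \( \Delta_{M}-\iota_{M}\colon M\to M\otimes_{k}k[H] \), localising (exact because \( S^{-1}A \) is flat over \( A \)), and identifying \( S^{-1}(M\otimes_{k}k[H])\cong (S^{-1}M)\otimes_{k}k[H] \) --- is the standard way to make this precise, and it simultaneously handles injectivity and the surjectivity step (which, unwound, is the observation that if \( (1/s)(\Delta_{M}(m)-m\otimes 1)=0 \) then \( tm\in M^{H} \) for some \( t\in S \)). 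You are also right that neither the torsion-freeness of \( M \) nor the flatness of \( A \) over \( k \) enters; torsion-freeness is only relevant to the surrounding application. The one point worth stating explicitly is that the commutation hypothesis, written in the lemma for points \( h\in H \), must be read at the level of the comodule map (i.e.\ \( \Delta_{M}(am)=(a\otimes 1)\Delta_{M}(m) \)) in order to conclude that \( \Delta_{M} \) is \( A \)-linear and that \( S^{-1}M \) inherits an \( H \)-module structure at all; for a reduced group over the algebraically closed field of the paper the two readings agree, and in the application the \( A \)-action is by \( H \)-equivariant maps anyway, so this is a matter of phrasing rather than a gap.
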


\begin{proof}
  The map is induced by \( s^{-1}m\mapsto s^{-1}m \).
\end{proof}


 Using the above lemmas, we have isomorphisms of filtered vector spaces
 \begin{align*}
 & (\mathbf{T}(\lambda)\otimes k_{-w_{0}\mu}\otimes
   k[\mathfrak{b}])^{B}\otimes_{k[\mathfrak{t}]}k_{h}\\
 =&(\mathbf{T}(\lambda)\otimes k_{-w_{0}\mu}\otimes
   k[\mathfrak{b}])^{B}\otimes_{k[\mathfrak{t}]}k[\mathfrak{t}_{\text{rs}}]\otimes_{k[\mathfrak{t}_{\text{rs}}]}
    k_{h}  \\
  =&(\mathbf{T}(\lambda)\otimes k_{-w_{0}\mu}\otimes
   k[\mathfrak{b}]\otimes_{k[\mathfrak{t}]}k[\mathfrak{t}_{\text{rs}}])^{B}\otimes_{k[\mathfrak{t}_{\text{rs}}]}
    k_{h}  \\
 =&(\mathbf{T}(\lambda)\otimes k_{-w_{0}\mu}\otimes
   k[h+\mathfrak{n}]\otimes k[\mathfrak{t}_{\text{rs}}])^{B}\otimes_{k[\mathfrak{t}_{\text{rs}}]}
    k_{h}  \\
 =&(\mathbf{T}(\lambda)\otimes k_{-w_{0}\mu}\otimes
   k[h+\mathfrak{n}])^{B}\otimes k[\mathfrak{t}_{\text{rs}}]\otimes_{k[\mathfrak{t}_{\text{rs}}]}
    k_{h}  \\
=& (\mathbf{T}(\lambda)\otimes k_{-w_{0}\mu}\otimes
   k[h+\mathfrak{n}])^{B},
 \end{align*}
 where the second isomorphism is due to Lemma
 \ref{lemma:localisation}, the third is due to Lemma
 \ref{lemma:Jantzen}, and the fourth is due to Lemma
 \ref{lemma:tensor}.

 Hence there is an isomorphism of filtered vector spaces
 \begin{displaymath}
   \mathbf{H}^{\bullet-\dim(\mathcal{G}r^{\mu})}_{\phi}(i_{\mu}^{!}\mathcal{E}(\lambda))\cong (\mathbf{T}(\lambda)\otimes k_{-w_{0}\mu}\otimes
   k[h+\mathfrak{n}])^{B}.
 \end{displaymath}

 On the other hand, by using the geometric Satake equivalence and
 equivariant localisation,  the left-hand side is isomorphic to the vector
 space \( \mathbf{T}(\lambda)_{\mu} \), hence in particular we have
 \begin{displaymath}
   \dim \mathbf{T}(\lambda)_{\mu}=\dim (\mathbf{T}(\lambda)\otimes k_{-w_{0}\mu}\otimes
   k[h+\mathfrak{n}])^{B}.
 \end{displaymath}

 \begin{lemma}
   Let \( M \) be a \( B \)-module and \( \mu\in X(T) \). Then there
   exists a natural isomorphism
   \begin{displaymath}
     (M\otimes k_{-\mu})^{B}\cong (M^{U})_{\mu}
   \end{displaymath}
   defined by sending \( m\otimes 1 \) to \( m \).
 \end{lemma}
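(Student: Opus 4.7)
The plan is to unwind the definitions and verify that the map $m\otimes 1 \mapsto m$ gives a well-defined bijection. Since $B = T\ltimes U$ and $k_{-\mu}$ is the one-dimensional $B$-module on which $U$ acts trivially and $T$ acts by the character $-\mu$, $B$-invariance of an element of $M\otimes k_{-\mu}$ decomposes into separate conditions for $U$ and $T$.

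First I would observe that every element of $M\otimes k_{-\mu}$ can be written uniquely as $m\otimes 1$ for some $m\in M$, so as a vector space this is just $M$. The key step is to check that $m\otimes 1$ is $B$-invariant if and only if $m\in (M^U)_{\mu}$. On the one hand, $U$-invariance of $m\otimes 1$ reads $(u\cdot m)\otimes 1 = m\otimes 1$ for all $u\in U$, which is precisely $m\in M^U$. On the other hand, $T$-invariance reads $(t\cdot m)\otimes \mu(t)^{-1} = m\otimes 1$ for all $t\in T$, i.e.\ $t\cdot m=\mu(t)m$, which says $m$ has $T$-weight $\mu$. Since the $U$-action commutes with the weight decomposition under $T$ (because $T$ normalises $U$), the subspace $M^U$ inherits a $T$-grading and the two conditions together are equivalent to $m\in (M^U)_{\mu}$.

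Thus the map $(M^U)_{\mu} \to (M\otimes k_{-\mu})^B$, $m\mapsto m\otimes 1$ is a well-defined linear isomorphism, and its naturality in $M$ is immediate from the construction. There is no real obstacle here; the only subtlety is to keep track of the sign convention in $k_{-\mu}$ so that the weight extracted is $+\mu$ rather than $-\mu$, which is exactly what the sign ensures.
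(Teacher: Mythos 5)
Your argument is correct and amounts to the same computation the paper performs: the paper writes $(M\otimes k_{-\mu})^{B}\cong \Hom_{B}(k_{\mu},M)\cong (M^{U})_{\mu}$ and leaves the identifications as standard, while you unwind them directly by splitting $B$-invariance into the $U$-invariance and $T$-weight-$\mu$ conditions via $B=T\ltimes U$. No gap; the sign bookkeeping and the use of normality of $U$ in $B$ are exactly the points that need checking, and you address both.
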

 \begin{proof}
   \begin{displaymath}
      (M\otimes k_{-\mu})^{B}\cong \Hom_{B}(k_{\mu},M)\cong (M^{U})_{\mu}.
   \end{displaymath}
 \end{proof}
 \begin{lemma}
   The map
   \begin{displaymath}
     \Lambda: (\mathbf{T}(\lambda)\otimes k[h+\mathfrak{n}])^{U}\rightarrow \mathbf{T}(\lambda)
   \end{displaymath}
   defined by evaluation on \( h \) is an isomorphism of \( T
   \)-modules.

   In particular, it induces an isomorphism of vector spaces:
    \begin{displaymath}
   \Lambda_{\mu}: (\mathbf{T}(\lambda)\otimes k_{-\mu}\otimes
   k[h+\mathfrak{n}])^{B} \cong \mathbf{T}(\lambda)_{\mu}.
 \end{displaymath}
\end{lemma}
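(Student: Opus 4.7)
The plan is to reduce the lemma to the standard \emph{tensor identity} statement that for any $U$-module $M$, one has $(M\otimes k[U])^{U}\cong M$ via evaluation at the identity. The crucial geometric input is the same fact already used in \autoref{lemma:Jantzen}: since $h\in\mathfrak{t}_{\mathrm{rs}}$ is regular semisimple, the orbit map
\[
\alpha\colon U \longrightarrow h+\mathfrak{n},\qquad u\longmapsto \Ad(u)(h),
\]
is an isomorphism of varieties (this is again the content of \cite{Jan04}~p.~188). Moreover, if one lets $U$ act on itself by left translation and on $h+\mathfrak{n}$ by the restriction of the adjoint action, then $\alpha$ is $U$-equivariant: $\Ad(u')(\Ad(u)(h))=\Ad(u'u)(h)$. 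Hence $\alpha$ induces an isomorphism of $U$-modules $k[h+\mathfrak{n}]\xrightarrow{\sim}k[U]$.

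With this in hand, I would carry out the following steps. First, transport the problem along $\alpha$ to obtain
\[
(\mathbf{T}(\lambda)\otimes k[h+\mathfrak{n}])^{U}\;\cong\;(\mathbf{T}(\lambda)\otimes k[U])^{U},
\]
and then identify the right-hand side with $\mathbf{T}(\lambda)$ via the standard isomorphism $(\mathbf{T}(\lambda)\otimes k[U])^{U}\cong\mathbf{T}(\lambda)$, viewing $\mathbf{T}(\lambda)\otimes k[U]$ as the space of morphisms $U\to\mathbf{T}(\lambda)$ and restricting to $U$-equivariant ones, which are uniquely determined by their value at $1\in U$. Under the chain of identifications, the evaluation at $1\in U$ corresponds precisely to evaluation at $h=\Ad(1)(h)\in h+\mathfrak{n}$, so the composite is exactly the map $\Lambda$ of the statement.

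Next, I would verify $T$-equivariance of $\Lambda$. Writing an element $f\in(\mathbf{T}(\lambda)\otimes k[h+\mathfrak{n}])^{U}$ as a morphism $f\colon h+\mathfrak{n}\to\mathbf{T}(\lambda)$, the diagonal $T$-action reads $(t\cdot f)(x)=t\cdot f(\Ad(t^{-1})x)$. Because $T$ centralises $h$, we get $(t\cdot f)(h)=t\cdot f(h)$, so $\Lambda(t\cdot f)=t\cdot\Lambda(f)$.

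Finally, for the statement about $\Lambda_{\mu}$, I would simply apply the preceding lemma $(M\otimes k_{-\mu})^{B}\cong (M^{U})_{\mu}$ with $M=\mathbf{T}(\lambda)\otimes k[h+\mathfrak{n}]$ (noting $B=TU$ and $T$ acts trivially on $k_{-\mu}\otimes k_{\mu}$) and take the $\mu$-weight space of the already-established $T$-equivariant isomorphism $\Lambda$. I do not anticipate a real obstacle here: everything is a formal consequence of the torsor structure on $h+\mathfrak{n}$ combined with the tensor identity. The only point that requires a moment of care is the bookkeeping of left-versus-right $U$-actions when passing from $k[h+\mathfrak{n}]$ to $k[U]$ and keeping track of evaluation at $1$ versus evaluation at $h$.
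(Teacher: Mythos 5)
Your proof is correct, but it takes a genuinely different route from the paper's, and in one respect it is more self-contained. The paper checks $T$-equivariance exactly as you do (because $T$ fixes $h$), proves \emph{injectivity} of $\Lambda$ by the direct argument that a $U$-equivariant function on $h+\mathfrak{n}$ vanishing at $h$ must vanish on the whole orbit $\Ad(U)(h)=h+\mathfrak{n}$, and then gets \emph{surjectivity} from a dimension count: the identity $\dim \mathbf{T}(\lambda)_{\mu}=\dim(\mathbf{T}(\lambda)\otimes k_{-w_{0}\mu}\otimes k[h+\mathfrak{n}])^{B}$, summed over all $\mu$, which itself rests on the geometric Satake equivalence and equivariant localisation. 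You instead exploit the fact that the orbit map $U\to h+\mathfrak{n}$, $u\mapsto\Ad(u)(h)$, is a $U$-equivariant isomorphism of varieties (the same fact from \cite{Jan04} that the paper uses in Lemma \ref{lemma:Jantzen} and again in its injectivity argument), so that $(\mathbf{T}(\lambda)\otimes k[h+\mathfrak{n}])^{U}\cong(\mathbf{T}(\lambda)\otimes k[U])^{U}\cong\mathbf{T}(\lambda)$ by evaluation at $1$, which corresponds to evaluation at $h$. This yields bijectivity in one stroke, purely algebraically, without invoking the geometric dimension formula; the paper's route gets the dimension equality essentially for free from results it needs anyway, at the cost of making the lemma depend on them. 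Your separate, direct check of $T$-equivariance (rather than trying to make the identification with $k[U]$ equivariant for $T$) is the right way to avoid the bookkeeping issue you flag, and your deduction of the statement about $\Lambda_{\mu}$ from the preceding lemma is exactly what the paper intends. The one computation worth writing out is the variance check underlying $(\mathbf{T}(\lambda)\otimes k[U])^{U}\cong\mathbf{T}(\lambda)$: an invariant element is a morphism $f\colon U\to\mathbf{T}(\lambda)$ with $f(u'u)=u'\cdot f(u)$, hence $f(u)=u\cdot f(1)$, which is precisely what makes evaluation at $1$ bijective.
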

\begin{proof}
  \( \Lambda\) is \( T \)-equivariant because \( h \) is fixed by \( T \).

  On the other hand, we already have  
   \begin{displaymath}
\dim \mathbf{T}(\lambda)_{\mu}=\dim (\mathbf{T}(\lambda)\otimes k_{-w_{0}\mu}\otimes
   k[h+\mathfrak{n}])^{B}=\dim ( (\mathbf{T}(\lambda)\otimes k[h+\mathfrak{n}])^{U})_{\mu}     
 \end{displaymath}
 because the dimension of the weight spaces with respect to \( \mu \)
 and \( w_{0}\mu \) are the same.
 By taking the sum over all \( \mu \), we have \( \dim
 (\mathbf{T}(\lambda)\otimes k[h+\mathfrak{n}])^{U} =\dim \mathbf{T}(\lambda)\). Hence
 it suffices to prove that \( \Lambda \) is injective because both
 sides are finite dimensional.

 The idea of the proof of injectivity is quite simple. Roughly
 speaking, 
   an \( U \)-equivariant function on \( h+\mathfrak{n} \) is zero if
    it is zero on \( h \). The following is just a more rigorous version of
    this simple idea.

 Identify \((\mathbf{T}(\lambda)\otimes
 k[h+\mathfrak{n}])^{U}=\Hom_{U}(\mathbf{T}(\lambda)^{*}, k[h+\mathfrak{n}])
 \) and \( \mathbf{T}(\lambda)=(\mathbf{T}(\lambda)^{*})^{*} \). Then for \( f\in
 \Hom_{U}(\mathbf{T}(\lambda)^{*}, k[h+\mathfrak{n}])  \), \( \Lambda(f):
 \mathbf{T}(\lambda)^{*}\to k \) is defined by \( \Lambda(f)(\psi)=f(\psi)(h)
 \). Hence if \( \Lambda(f)=0 \), then for all \( \psi\in
 \mathbf{T}(\lambda)^{*} \) and \( u\in U \), we have \(
 f(\psi)(\Ad(u)(h))=f(u^{-1}\psi)(h)=\Lambda(f)(u^{-1}\psi)=0 \).
 But since \( h \) is principal semi-simple, we have \(
 \Ad(U)(h)=h+\mathfrak{n} \), hence \( f(\psi)(X)=0 \) for all \( X\in
 h+\mathfrak{n} \). Since \( k \) is an infinite field, this means
 that as an element in \( k[h+\mathfrak{n}] \), we have \( f(\psi)=0
 \) (another way to think about this: \( k \) is algebraically closed
 and \(k[h+\mathfrak{n}]  \) is reduced, then if some function is zero
 at each closed point, it is zero by Hilbert's
 Nullstellensatz.). Since \( \psi \) is arbitrary, we have \( f=0
 \). This proves the injectivity.
\end{proof}

We conclude the proof of Theorem \ref{thm:main} by the following
\begin{proposition}
  Let \( e\in \mathfrak{n} \) such that \( [h,e]=e \). Then \( e \) is
  a principal nilpotent.  
  Then we have
  \begin{displaymath}
    f\in \Hom_{B}(\mathbf{T}(\lambda)^{*}\otimes k_{\mu},
    k[h+\mathfrak{n}]_{n})\Leftrightarrow \Lambda(f)\in \mathbf{F}_{n}(\mathbf{T}(\lambda)_{\mu})
  \end{displaymath}
  for all \( n\in\mathbb{N} \).
\end{proposition}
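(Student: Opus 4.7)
The plan is to Taylor-expand $f(\psi)\in k[h+\mathfrak{n}]$ at $h$ along the direction of $e$, using the $U$-equivariance of $f$, and match the resulting coefficients with the divided-power action of $e$ on $\Lambda(f)$. That $e$ is principal nilpotent follows from the setup of Theorem~\ref{thm:main}: $h$ is the $\mathfrak{t}$-adapted partner of a principal nilpotent, so the $+1$-eigenspace of $\ad(h)$ on $\mathfrak{n}$ is spanned by the simple root vectors, and any $e$ in this eigenspace with nonzero projection to each simple root space is principal. Since $[h,e]=e$ forces $(\ad e)(h)=-e$ and $(\ad e)^2(h)=[e,-e]=0$, one computes $\Ad(\exp(-te))(h)=\exp(-t\,\ad e)(h)=h+te$ \emph{exactly}.

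Combining this with the $U$-equivariance of $f$ yields
\[
\Lambda(f)(\exp(te)\psi)\;=\;f(\psi)(\Ad(\exp(-te))h)\;=\;f(\psi)(h+te).
\]
Expanding each side in $t$: the left side equals $\sum_k t^k\Lambda(f)(e^{(k)}\psi)=\sum_k(-t)^k\psi(e^{(k)}\Lambda(f))$, via the divided-power exponential $\exp(te)\psi=\sum_k t^k e^{(k)}\psi$ in $\Dist(U)$ and the duality $(e^{(k)}\psi)(v)=(-1)^k\psi(e^{(k)}v)$; the right side expands as $\sum_k t^k F_k(\psi)(e^{\otimes k})$, where $F_k(\psi)\in S^k(\mathfrak{n}^*)$ is the degree-$k$ homogeneous component of $f(\psi)$ under $k[h+\mathfrak{n}]\cong S(\mathfrak{n}^*)$. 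Matching $t^k$-coefficients gives the key identity
\[
(-1)^k\psi\bigl(e^{(k)}\Lambda(f)\bigr)\;=\;F_k(\psi)\bigl(e^{\otimes k}\bigr).
\]
The forward direction is immediate: if $f$ factors through $k[h+\mathfrak{n}]_{\leq n}$, then $F_k=0$ for $k>n$, hence $\psi(e^{(k)}\Lambda(f))=0$ for all $\psi$ and $k>n$, so $e^{(k)}\Lambda(f)=0$ for $k>n$, i.e.\ $\Lambda(f)\in\mathbf{F}_n$.

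For the converse, $\Lambda(f)\in\mathbf{F}_n$ only yields $F_k(\psi)(e^{\otimes k})=0$ for $k>n$, i.e.\ vanishing of $F_k(\psi)$ at the single point $e\in\mathfrak{n}$. The main obstacle is to upgrade this to $F_k(\psi)\equiv 0$ as an element of $S^k(\mathfrak{n}^*)$. The strategy is to exploit $T$-equivariance: substituting $\Ad(t_0)e$ for $e$ (which still satisfies $[h,\Ad(t_0)e]=\Ad(t_0)e$ because $\Ad(t_0)$ fixes $h$) gives $F_k(\psi)\bigl((\Ad(t_0)e)^{\otimes k}\bigr)=0$ for every $t_0\in T$, i.e.\ vanishing of $F_k(\psi)$ on the full $T$-orbit of $e$; for $e$ principal this orbit is Zariski-dense in the $+1$-eigenspace $\mathfrak{n}^{(1)}$ of $\ad(h)$. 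The $B$-equivariance of $f$ then forces $F_k(\psi)$ to lie in a single $T$-weight subspace of $S^k(\mathfrak{n}^*)$ determined by the weight of $\psi$, whose components are indexed by partitions of $\nu-\mu$ into positive roots (where $-\nu$ is the weight of $\psi$); a weight-by-weight analysis, combined with density on $\mathfrak{n}^{(1)}$, closes the argument and yields $F_k\equiv 0$ for $k>n$.
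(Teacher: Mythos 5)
Your first half is sound and coincides with the paper's final step: since $[e,[e,h]]=0$, the identity $\Ad(\exp(-te))(h)=h+te$ is exact, and matching the coefficient of $t^k$ in $f(\psi)(h+te)=\Lambda(f)(\exp(te)\psi)$ against the divided-power action gives exactly the statement that the directional degree of $f(\psi)$ along $e$ is $\le n$ iff $e^{(k)}\Lambda(f)=0$ for $k>n$. This yields the forward implication. The converse, however, is where the real content lies, and your proposed upgrade does not close it. From $\Lambda(f)\in\mathbf{F}_n$ you only get that the homogeneous component $F_k(\psi)\in S^k(\mathfrak{n}^*)$ vanishes at the single point $e$ for $k>n$, and your strategy uses only the torus: $T$-equivariance gives vanishing on $T\cdot e$, which is dense at best in the eigenspace $\mathfrak{n}^{(1)}=\{X\in\mathfrak{n}\mid[h,X]=X\}$, a \emph{proper} subspace of $\mathfrak{n}$ except in very small cases. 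Vanishing of a form on $\mathfrak{n}^{(1)}$ carries no information about most of its weight components: in the weight space of $S^k(\mathfrak{n}^*)$ spanned by monomials $x_{\alpha_1}\cdots x_{\alpha_k}$ with $\alpha_1+\cdots+\alpha_k$ fixed, every monomial involving a coordinate $x_\alpha$ with $\alpha(h)\neq 1$ already restricts to zero on $\mathfrak{n}^{(1)}$, so those components are completely unconstrained and no ``weight-by-weight analysis'' can recover them. Nor can you feed the $U$-part of the $B$-equivariance directly into the $F_k$: conjugation by $u\in U$ moves the base point $h$ inside $h+\mathfrak{n}$ and therefore does not preserve the decomposition of $k[h+\mathfrak{n}]$ into homogeneous pieces, only the filtration by degree.

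What is actually needed --- and what the paper uses --- is the density of the full orbit $B\cdot e$ in $\mathfrak{n}$ (this is precisely where principal-ness of $e$ enters, rather than through $\mathfrak{n}^{(1)}$). The paper reformulates $\deg f(\psi)\le n$ as a bound on the directional degree of $t\mapsto f(\psi)(h+tX)$ for all $X$ in the dense subset $B\cdot e$, and then transports the known bound along the direction $e$ to the direction $b\cdot e$ by applying the hypothesis to $b^{-1}\psi$: choosing $b_0$ so that the top-degree form of $f(\psi)$ does not vanish at $b_0\cdot e$, one gets $\deg f(\psi)=\deg_t f(\psi)\bigl(\Ad(b_0)(h+te)\bigr)=\deg_t f(b_0^{-1}\psi)(h+te)\le n$. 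Some argument of this type, using the $U$-action and the density of $B\cdot e$ in all of $\mathfrak{n}$ rather than the $T$-action and $\mathfrak{n}^{(1)}$, is indispensable; as written, your converse direction has a genuine gap. (A smaller point: your justification that $e$ is principal assumes the $+1$-eigenspace of $h$ is spanned by the simple root vectors, which holds for $h$ of the form $\check\rho$ but is not automatic for the regular semisimple $h$ appearing in the paper's reduction.)
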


 \begin{rmk} Roughly speaking, the idea of the proof is as follows: 
        if a \( B \)-equivariant map from \( \mathbf{T}(\lambda)^{*}\otimes
        k_{\mu} \) to 
   \( k[h+\mathfrak{n}] \) takes any element to a polynomial that has
   degree \( \leq n \) along the direction \( e\in \mathfrak{n} \),
   then it takes any element to a polynomial with degree \( \leq n \),
   because \( B\cdot e \) is dense.
   We will make this idea rigorous in the proof.
      \end{rmk}

\begin{proof}
  Denote \(V=\mathbf{T}(\lambda)  \)
  Fix \( f\in \Hom_{B}(V^{*}\otimes k_{\mu},
    k[h+\mathfrak{n}]) \) and let \( v=\Lambda(f) \). Then \( 
    f\in \Hom_{B}(V^{*}\otimes k_{\mu},
    k[h+\mathfrak{n}]_{n}) \) if and only if for any \( \psi\in V^{*}
    \),  \( f(\psi\otimes 1)\in k[h+\mathfrak{n}] \) has degree \(
    \leq n \).
     Since \( k \) is an infinite field, \( f(\psi\otimes 1)\in k[h+\mathfrak{n}] \) has degree \(
    \leq n \) if and only if for all \( X\in \mathfrak{n} \), the
    polynomial in \( t \)
    \begin{displaymath}
      f(\psi\otimes 1)(h+tX )
    \end{displaymath}
    has degree \( \leq n \).
    
     Since \(B\cdot e \) is dense in \(
    \mathfrak{n} \), we have \( f\in \Hom_{B}(V^{*}\otimes k_{\mu},
    k[h+\mathfrak{n}]_{n}) \) if and only if it satisfies the
    following condition (A):

    ``For all \( \psi\in V^{*}
    \) and all \( b\in B \),
    the polynomial (in
    \( t \))
      \( f(\psi\otimes 1)(h+t b\cdot e) \) 
    has degree \( \leq n \).''

    Claim: (A) is equivalent to the condition (B):

    ``For all \( \psi\in V^{*}
    \),
    the polynomial (in
    \( t \))
      \( f(\psi\otimes 1)(h+te) \)
      has degree \( \leq n \).''

    Proof of the claim: (A) clearly implies (B). Now suppose \( f \)
    satisfies (B). Fix \( \psi\in V^{*} \) arbitrary, choose a \(
    b_{0}\in B \) such that the polynomial \( f(\psi\otimes 1)\in
    k[h+\mathfrak{n}] \) reaches maximal degree in the direction \(
    b_{0}\cdot e\in \mathfrak{n} \) (such a \( b_{0} \) exists because
    \( k \) is infinite and \( B\cdot e \) is dense in \( \mathfrak{n}
    \)). Then for \( b\in B \) arbitrary, the degree of \(
    f(\psi\otimes 1)(h+t b\cdot e) \) is no larger than that of \(
    f(\psi\otimes 1)(h+t b_{0}\cdot e) \). But since the latter is
    maximal,  it is the same with the degree of 
    \begin{displaymath}
     f(\psi\otimes 1)(b_{0}h+t b_{0}\cdot e)=f(b_{0}^{-1}(\psi\otimes
     1))(h+te)=f((\mu(b_{0})^{-1}b_{0}^{-1}\cdot\psi)\otimes 1)(h+te),
   \end{displaymath}
   which is \( \leq n \) by applying (B) to
   \(\mu(b_{0})^{-1}b_{0}^{-1}\cdot\psi\in V^{*}  \). This finishes
   the proof of the claim.

   Using
    \begin{displaymath}
    f(\psi\otimes 1)(h+te)=  f(\psi\otimes
    1)(\exp(te)\cdot h)=f(\exp(-te)(\psi\otimes
    1))(h). \end{displaymath} and the claim, we have 
   \( 
    f\in \Hom_{B}(V^{*}\otimes k_{\mu},
    k[h+\mathfrak{n}]_{n}) \) if and only if for any \( \psi\in V^{*}
    \), the polynomial in \( t \)
    \begin{displaymath}
     f(\exp(-te)(\psi\otimes
    1))(h) 
   \end{displaymath}
   has degree \( \leq n \). But the element in \( (V^{*})^{*} \)
   sending \( \psi\in V^{*} \) to \( f(\exp(-te)(\psi\otimes
    1))(h)=f((\exp(-te)\psi)\otimes
    1)(h)
   \) is just \(\exp(te) \Lambda(f)=\exp(te)\cdot v \). 
\end{proof}
\def\refname{References}

\end{document}